\newenvironment{proof}{\noindent \textbf{Proof.}
}{\unskip\nobreak\hfill\nobreak$\square$\vskip 0.5\baselineskip}
\newtheorem{theorem}{Theorem}[section]
\newtheorem{lemma}{Lemma}[section]
\newtheorem{proposition}{Proposition}[section]
 \def\square{\hbox{\vrule height8pt depth0pt
\vbox{\hrule width7.2pt\vskip7.2pt\hrule width7.2pt}\vrule height8pt
depth0pt}\smallskip}
\date{}
\begin{document}

\title{The energy change of the complete multipartite graph\footnote{Partially supported  by NSFC project No. 11271288. Email:shan\_haiying@tongji.edu.cn (Hai-Ying Shan), changxiang-he@163.com (Chang-Xiang He, Corresponding author)  }}
\author{\small Hai-Ying Shan$^{1}$, Chang-Xiang He$^{2}$, Zhen-Sheng Yu$^{2}$,
\\
\small 1.  School of Mathematical Sciences, Tongji University, Shanghai, 200092, China\\ 
\small 2.  College of Science, University of Shanghai for Science
and Technology, Shanghai, 200093, China}
\date{}
\maketitle

 {\small\bf Abstract:}\,\,\,{\small The energy of a graph is
defined as the sum of the absolute values of all eigenvalues of the
graph. Akbari et al.  \cite{S. Akbari} proved that for a complete multipartite graph  $K_{t_1 ,\ldots,t_k}$,  if   $t_i\geq 2 \ (i=1,\ldots,k)$,  then deleting any edge will  increase the energy. A  natural  question is how the energy changes when $\min\{t_1 ,\ldots,t_k\}=1$. In this paper, we will answer this question and  completely determine how the energy  of a complete multipartite graph changes when  one edge is removed.
\vspace{2mm}

\noindent {\bf Key words:}\ Graph;\  Complete multipartite graph;\, Energy

\section{Introduction}

 Let $G = (V, E)$ be a simple connected graph with vertex set $V =
\{v_1, v_2,\cdots, v_n\}$ and edge set $E$.
 The {\it adjacency
matrix} of $G$, $A(G) = (a_{ij})$, is an $n \times n$ matrix, where
$a_{ij} = 1$ if $v_i$ and $v_j$ are adjacent and $a_{ij} = 0$,
otherwise. Thus $A(G)$ is a real symmetric matrix with zeros on the
diagonal, and all eigenvalues of $A(G)$ are real.   The {\it
characteristic polynomial}   $\det(xI-A(G))$ of the adjacency matrix $A(G)$ of a graph $G$ is also called the   characteristic polynomial of $G$, denoted by $\Phi(G,x)$ or $\Phi(G)$. The {\it eigenvalues} of graph $G$  are
the eigenvalues of $A(G)$, written as  $\lambda_1(G)\geq \lambda_2(G)\geq\ldots \geq\lambda_n(G)$.
The energy of $G$, denoted by $\mathcal{E}(G)$, is defined \cite{Gutman1, Gutman2} as
$\mathcal{E}(G) =\sum\limits_{i=1}^n \vert\lambda_i(G)\vert$.

For the polynomial $f(x)$, if all the roots of $f(x)=0$ are real, we also define the energy of $f(x)$ as  the sum of the absolute values of its roots, denoted by $\mathcal{E}(f)$.

We denote a complete multipartite graph with $k\geq2$ parts by $K_{t_1 ,\ldots,t_k}$, where $t_i\ (i=1,\ldots,k)$ is the number of vertices in the $i^{th}$ part of the graph, and we write the $i^{th}$ part as $t_i$-part.

One area in the study of graph energy, called graph energy change is to understand how graph energy changes when a subgraph is deleted. It becomes especially interesting when the subgraph is just an edge. As we know the energy of a graph may increase, decrease, or remain the same when an edge is deleted.  For more details see \cite{J. Day} and the references therein.

Akbari, Ghorbani
and Oboudi  \cite{S. Akbari} (see Theorem 4) proved that
 for any complete multipartite graph $K_{t_1 ,\ldots,t_k}$   with $k \geq 2, t_i \geq 2$,
then $ \mathcal{E}(K_{t_1 ,\ldots,t_k} - e)>\mathcal{E}(K_{t_1,\ldots,t_k} ) $ for any edge $e$.
Then a natural  question is how the energy changes when $\min\{t_1 ,\ldots,t_k\}=1$. In this paper, we will answer this question and  completely determine how the energy  of a complete multipartite graph changes when  one edge is deleted. Our main result is
\begin{theorem}\label{thm:main} Let  $e$ be an edge between the $t_i$-part and $t_j$-part of $K_{t_1,\ldots,t_k}$.  Then
\begin{enumerate}[(1).]
\item  For $k \geq 4$,  if $t_i=t_j=1$, then  $\mathcal{E}(K_{t_1,\ldots,t_k}-e )<\mathcal{E}(K_{t_1,\ldots,t_k})$, otherwise,  $\mathcal{E}(K_{t_1,\ldots,t_k}-e)>\mathcal{E}(K_{t_1,\ldots,t_k})$.
\item For  $k=3$, if $t_i+t_j \leq 3$, then  $\mathcal{E}(K_{t_1,\ldots,t_k}-e)<\mathcal{E}(K_{t_1,\ldots,t_k})$, otherwise, $\mathcal{E}(K_{t_1,\ldots,t_k}-e)>\mathcal{E}(K_{t_1,\ldots,t_k})$.
\item For $k=2$, if $\min\{t_i,t_j\}=1$, then  $\mathcal{E}(K_{t_1,\ldots,t_k}-e)<\mathcal{E}(K_{t_1,\ldots,t_k})$, otherwise, $\mathcal{E}(K_{t_1,\ldots,t_k}-e)>\mathcal{E}(K_{t_1,\ldots,t_k})$.
\end{enumerate}
\end{theorem}

This paper is organized as follows. In Section 2, we will give a  generalization of Theorem 4 in \cite{S. Akbari} and some   results which will be needed in the next two sections.  In the third section, we will determine how the energy of a complete multipartite graph, with   at least four parts, changes when an edge is removed.   In the last section, we will characterize how the energy  of a complete tripartite graph changes when  an  edge is deleted.

\section{Preliminaries}

We begin this section with the Interlacing Theorem. By Perron-Frobenius theory, the largest eigenvalue of a connected graph goes down when one removes an edge or a vertex. Interlacing also gives more information about what happens with the $i^{th}$ largest eigenvalues.
 \begin{lemma}\label{Intelacing}(Interlacing) If $G$ is a graph on $n$ vertices with eigenvalues $\lambda_1(G) \geq \ldots \geq \lambda_n(G)$ and $H$ is an induced subgraph on $m$ vertices with eigenvalues $\lambda_1(H) \geq \ldots \geq \lambda_m(H)$, then for $i=1,\ldots, m,$
$$\lambda_i(G)\geq \lambda_i(H)\geq\lambda_{n-m+i}(G).$$
\end{lemma}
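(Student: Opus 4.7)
The plan is to deduce this lemma from the classical Cauchy interlacing theorem for principal submatrices of a real symmetric matrix. The first observation is that because $H$ is an \emph{induced} subgraph of $G$, the adjacency matrix $A(H)$ coincides, up to simultaneous permutation of rows and columns, with the principal submatrix of $A(G)$ indexed by the vertex set $V(H)$. Therefore it suffices to prove the following purely linear-algebraic statement: if $A$ is a real symmetric $n \times n$ matrix with eigenvalues $\alpha_1 \geq \cdots \geq \alpha_n$ and $B$ is an $m \times m$ principal submatrix of $A$ with eigenvalues $\beta_1 \geq \cdots \geq \beta_m$, then $\alpha_i \geq \beta_i \geq \alpha_{n-m+i}$ for every $i = 1, \ldots, m$.

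The main tool is the Courant--Fischer min--max characterization of eigenvalues of a real symmetric matrix, in its two equivalent forms
$$\alpha_i \;=\; \max_{\dim S = i}\; \min_{0 \neq x \in S} \frac{x^T A x}{x^T x} \;=\; \min_{\dim S = n-i+1}\; \max_{0 \neq x \in S} \frac{x^T A x}{x^T x},$$
together with the analogous identities for $B$ (with $n$ replaced by $m$). For the left inequality $\alpha_i \geq \beta_i$, I would take an $i$-dimensional subspace $S \subset \mathbb{R}^m$ achieving $\beta_i$ in the max--min form and extend each of its vectors by inserting zeros in the coordinates outside $V(H)$. The image $\widetilde S \subset \mathbb{R}^n$ is still $i$-dimensional, and for any $y \in S$ with extension $\widetilde y$ one has both $\widetilde y^T A \widetilde y = y^T B y$ and $\widetilde y^T \widetilde y = y^T y$, since $\widetilde y$ is supported on the index set underlying $B$. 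Hence $\widetilde S$ is admissible in the max--min representation of $\alpha_i$, which yields $\alpha_i \geq \beta_i$.

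For the right inequality $\beta_i \geq \alpha_{n-m+i}$, I would run the symmetric argument using the min--max (rather than the max--min) form: take an $(m-i+1)$-dimensional subspace $W \subset \mathbb{R}^m$ achieving $\beta_i$ in that representation, embed it by the same zero-extension into an $(m-i+1)$-dimensional $\widetilde W \subset \mathbb{R}^n$, and apply the min--max formula for $\alpha_{n-m+i}$; the dimensions match because $n - (n - m + i) + 1 = m - i + 1$. I do not expect a real obstacle: this is exactly the classical Cauchy interlacing theorem, and the only graph-theoretic input is the initial reduction to a principal submatrix, which uses the induced-subgraph hypothesis in an essential way (for a non-induced subgraph the corresponding submatrix need not be principal and interlacing can genuinely fail). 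The single point that calls for care is the dimensional bookkeeping, to ensure that the two halves of Courant--Fischer are invoked at matching indices on the $A$- and $B$-sides.
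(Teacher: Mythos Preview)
Your proposal is correct and follows the standard textbook route via the Courant--Fischer min--max principle; the reduction to a principal submatrix is exactly where the induced-subgraph hypothesis is used, and the dimension count for the second inequality is right. Note, however, that the paper does not actually prove this lemma: it is stated as a well-known background result (the Cauchy interlacing theorem) and used without proof, so there is no ``paper's own proof'' to compare against. One tiny quibble with your parenthetical: for a non-induced subgraph $H$, the adjacency matrix $A(H)$ is not merely a non-principal submatrix of $A(G)$ but in fact not a submatrix at all (some entries change from $1$ to $0$); the conclusion that interlacing can fail is of course still correct.
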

In the next two sections, we will use $\lambda_2(G)\geq \lambda_2(H)$ (where $H$ is an induced subgraph of $G$) many times.

As known, equitable partition   represents a powerful tool in spectral graph theory. In this paper we also should use this powerful tool to simplify our calculation.

Given a graph $G$, the partition $V(G)=V_1\dot\cup V_2\dot\cup\ldots \dot\cup V_k$ is an equitable partition if every vertex in $V_i$ has the same number of neighbours in $V_j$, for all $i,\ j\in \{1,2,\ldots,k\}$.
Suppose $\Pi$ is an equitable partition $V(G)=V_1\dot\cup V_2\dot\cup\ldots \dot\cup V_k$ and that each vertex in $V_i$ has $b_{ij}$ neighbours in $V_j$ $(i,\ j\in \{1,2,\ldots,k\})$. The matrix $(b_{ij})$ is called the quotient matrix of $\Pi$, denoted by $B_{\Pi}$.  The largest  eigenvalue of $B_{\Pi}$ is also the spectral radius of $G$ (see     \cite{Cvetkovic},    Corollary 3.9).
 In order to determine the spectral radius of graph $G$, we can calculate the largest root of the characteristic polynomial of  one of its quotient matrices, which   has a lower degree.

For convenience, in this paper, we use   $\lambda(G)$ and ${\bf x}$, respectively, to denote the spectral radius and the corresponding unit eigenvector of the adjacency matrix of $G$.
Suppose $V_i$ is the  $t_i$-part of $K_{t_1,t_2\ldots,t_k}$,  then $V_1\cup V_2\cup\cdots\cup V_k  $ is an equitable partition.  Unless otherwise specified, the  cells of equitable partition of $K_{t_1,t_2\ldots,t_k}$  are $V_1,\ V_2,\ldots, V_k  $.
Obviously,   vertices in the same part $V_i$ have equal ${\bf x}$-components, denoted by $x_i$.

Akbari, Ghorbani
and Oboudi  ( see Theorem 4 in \cite{S. Akbari} ) proved that
 for any complete multipartite graph $K_{t_1 ,\ldots,t_k}$   with $k \geq 2, t_i \geq 2$,
then $ \mathcal{E}(K_{t_1 ,\ldots,t_k} - e)>\mathcal{E}(K_{t_1,\ldots,t_k} ) $ for any edge $e$.
Using the idea of Akbari, Ghorbani
and Oboudi, we get   a  generalization of this result.

\begin{theorem}\label{thm:two} Let $S$ be a non-empty edge subset of the complete multipartite graph $G=K_{t_1 ,\ldots,t_k}$ and $H$ be
the corresponding subgraph induced by $S$.  Let  $V_i$ be the $i^{th}$  part of $G$, and $U_i=V(H)\cap V_i$ $(1\leq i\leq k)$. If $ |V_i|\geq 2\lambda(H) |U_i|$ holds for any $ i $, then we have $\mathcal{E}(G-S)>\mathcal{E}(G)$.
\end{theorem}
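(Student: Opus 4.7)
The plan is to prove $\mathcal{E}(G-S)>\mathcal{E}(G)$ via a two-vector Rayleigh--Ritz argument generalizing the Akbari--Ghorbani--Oboudi idea. The baseline fact is that the $k\times k$ quotient matrix of the equitable partition into parts has a unique positive eigenvalue, so $A(G)$ has exactly one positive eigenvalue and $\mathcal{E}(G)=2\lambda(G)$; our goal becomes $\mathcal{E}(G-S)>2\lambda(G)$.

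First I would take $y$ to be the unit Perron eigenvector of $G$, which is constant $y_i$ on each $V_i$. The standard Rayleigh bound $y^{T}A(H)y\le\lambda(H)\sum_i|U_i|y_i^{\,2}$ together with $|V_i|\ge 2\lambda(H)|U_i|$ yields
\[
y^{T}A(H)y \;\le\; \tfrac{1}{2}\sum_i |V_i|y_i^{\,2} \;=\; \tfrac{1}{2},
\]
hence $y^{T}A(G-S)y = \lambda(G)-y^{T}A(H)y \ge \lambda(G)-\tfrac{1}{2}$. For the second test vector, let $P_0$ denote the orthogonal projection onto $\ker A(G)$ (vectors of sum zero on each part). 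I would construct $z\in\ker A(G)$ (automatically $z\perp y$, since $y$ lies in the row space of $A(G)$) by extending an eigenvector $w$ of $A(H)$ for its most negative eigenvalue: keep $z|_{U_i}=w|_{U_i}$, and assign a constant on $V_i\setminus U_i$ so that $z|_{V_i}$ sums to zero. The hypothesis $|V_i|\ge 2\lambda(H)|U_i|\ge 2|U_i|$ ensures $V_i\setminus U_i$ is large enough for this balancing, and after unit normalization a direct computation gives
\[
z^{T}A(G-S)z \;=\; -z^{T}A(H)z \;\ge\; y^{T}A(H)y,
\]
so that $y^{T}A(G-S)y+z^{T}A(G-S)z\ge\lambda(G)$.

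Rayleigh--Ritz for the orthonormal pair $\{y,z\}$ then gives $\lambda_1(G-S)+\lambda_2(G-S)\ge \lambda(G)$, and this inequality is \emph{strict} because $\mathrm{span}(y,z)$ is not $A(G-S)$-invariant: $y$ is an eigenvector of $A(G)$ but $A(H)y$ is typically not in $\mathrm{span}(y,z)$, so $A(G-S)y=\lambda(G)y-A(H)y$ escapes the span. Combined with $\lambda_1(G-S)\le\lambda(G)$ from Perron--Frobenius, strict inequality forces $\lambda_2(G-S)>0$. Therefore both $\lambda_1$ and $\lambda_2$ contribute positively to the energy and
\[
\mathcal{E}(G-S)\;\ge\;2\bigl(\lambda_1(G-S)+\lambda_2(G-S)\bigr)\;>\;2\lambda(G)\;=\;\mathcal{E}(G).
\]

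The main technical obstacle is the second Rayleigh estimate $z^{T}A(G-S)z\ge y^{T}A(H)y$: the construction of $z$ is straightforward when $H$ is bipartite (so that $\lambda_{\min}(H)=-\lambda(H)$), but for non-bipartite $H$ the cross-term coming from $A(H)|_{U_i\times(V_i\setminus U_i)}$ must be controlled using a more refined choice of $z$, perhaps by summing edge-localised kernel vectors with suitable signs rather than taking a single eigenvector extension.
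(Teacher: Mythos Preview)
Your first step---bounding $y^{T}Cy\le\tfrac12$ via the Perron vector and the hypothesis $|V_i|\ge 2\lambda(H)|U_i|$---is exactly what the paper does. The divergence is in how you secure the contribution of $\lambda_2(G-S)$. The paper bypasses your kernel--vector construction entirely: since for any edge $uv\in S$ with $u\in U_i$, $v\in U_j$ one has $|V_i|>|U_i|$ and $|V_j|>|U_j|$, pick $u'\in V_i\setminus U_i$ and $v'\in V_j\setminus U_j$; then $u\,v'\,u'\,v$ is an \emph{induced} $P_4$ in $G-S$. Interlacing gives $\lambda_2(G-S)\ge\lambda_2(P_4)=(\sqrt5-1)/2>\tfrac12$, so $\lambda_1(G-S)+\lambda_2(G-S)>\lambda(G)$ with a fixed numerical margin and the strict inequality $\mathcal{E}(G-S)>\mathcal{E}(G)$ follows immediately.

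About your own route: the obstacle you flag for non-bipartite $H$ is illusory. Every graph with an edge has $\lambda_{\min}(H)\le-1$ (test with $\pm1$ on the endpoints of an edge), and your extension satisfies $\|z\|^2\le 2$ by Cauchy--Schwarz together with $|V_i\setminus U_i|\ge(2\lambda(H)-1)|U_i|\ge|U_i|$; hence after normalisation $z^{T}A(G-S)z=-\lambda_{\min}(H)/\|z\|^2\ge\tfrac12\ge y^{T}Cy$ unconditionally, bipartite or not. The actual soft spot is the strictness step: ``$A(H)y$ is typically not in $\mathrm{span}(y,z)$'' is not a proof, and in borderline cases such as $G=K_{2,2}$ with $S$ a single edge both Rayleigh quotients hit exactly $\tfrac12$, so \emph{all} the slack must come from non-invariance of the two-dimensional span. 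That can be argued, but it is fiddly; the paper's $P_4$ trick sidesteps the issue by producing $\lambda_2(G-S)>\tfrac12$ outright.
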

\begin{proof} Let $A$ and $B$ be the adjacency matrices of $G$ and $G-S$, respectively. We may assume that $B =A-C$, where $C$ is the adjacency matrix of the spanning subgraph of $G$ with only   edges in $S$. Let ${\bf x}$ be the Perron vector of $A$.

Since each part of $G$ is a cell of an equitable partition of $G$, the vertices of each part have the same corresponding entries in  ${\bf x}$.   By the Rayleigh-Ritz theorem and $|V_i| \geq 2\lambda(H) |U_i|$ for $1 \leq i \leq k$, we have
$${\bf x}^TC{\bf x}= {\bf y}^T A(H) {\bf y} \leq  \lambda(H){\bf y}^T{\bf y} \leq  \frac{1}{2}{\bf x}^T{\bf x}= \frac{1}{2},$$
where ${\bf y}$ is the subvector of ${\bf x}$ indexed by vertices in $H$.


Thus,
$$\lambda(B)\geq {\bf x}^TB{\bf x}={\bf x}^TA{\bf x}-{\bf x}^TC{\bf x}\geq \lambda(A)-\frac{1}{2}.$$

Suppose $e=(u,v) \in S$ and $u \in U_i$, $v \in U_j$.
Since $S$ is nonempty, $\lambda(H) \geq 1$. So $|V_i|\geq 2|U_i|>|U_i|$,$|V_j|\geq 2|U_j|>|U_j|$. Let $u' \in V_i-U_i$ and $v' \in V_j-U_j$. Then $ P_4 =uv'u'v$ is an induced subgraph of $G−e$. Therefore, by the Interlacing theorem,
$$\lambda_2(B)\geq \lambda_2(P_4)\approx 0.618.$$
Thus $$ \mathcal{E}(
G - S)\geq 2(\lambda(B)+\lambda_2(B))>2\lambda(A)=\mathcal{E}(G). $$
\end{proof}

 Obviously, Theorem \ref{thm:two} generalizes  Theorem 4 in \cite{S. Akbari}. Observe that if $U_i=\emptyset$, the condition $ |V_i|\geq 2\lambda(H) |U_i|=0$ holds trivially whether $|V_i|$ is 1 or not, so  the complete multipartite graph in the above theorem  needs not  be  1-part free.

  Theorem \ref{thm:two} immediately implies that deleting any edge between non-1-parts of the complete multipartite graph will increase the energy.

However, deleting one edge between two 1-parts of complete multipartite graph will decrease the energy. If  $K_{t_1 ,\ldots,t_k}$ has  two 1-parts, without loss of generality, we assume $t_1=t_2=1$, and  $e$ is the edge between these 1-parts, then $$\mathcal{E}(K_{1,1,t_3 ,\ldots,t_k}-e)=\mathcal{E}(K_{2,t_3 ,\ldots,t_k})=2\lambda (K_{2,t_3 ,\ldots,t_k})<2\lambda (K_{1,1,t_3 ,\ldots,t_k})=\mathcal{E}(K_{1,1,t_3 ,\ldots,t_k}).$$

In order to completely determine how the energy  of complete multipartite graph changes when one edge is removed, we only need to consider    the deleted edge is between a 1-part and a non-1-part.
So in the next we assume that the considered  complete multipartite graph  is $K_{1,i,t_3\ldots,t_k}$ (where $i\geq 2$) and the deleted edge    is between 1-part and $i$-part.

Without loss of generality, we assume that   ${\bf x}$-components of the ends of the deleted edge are $x_1$ and $x_2$, respectively.

The following lemma is a starting point of our discussions.

\begin{lemma}\label{lem:rey} Let  ${\bf x}$ be a perron vector of complete multipartite graph $G$. Let $e=uv$ be an edge of $G$ and the   corresponding entries  in  ${\bf x}$ be $x_1$ and $x_2$, respectively. If there exists some constant $a$ such that $\lambda_2(G-e)>a$ and $x_1^2+x_2^2\leq a$, then $\mathcal{E}(G-e)>\mathcal{E}(G)$.
\end{lemma}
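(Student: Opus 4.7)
The plan is to adapt the Rayleigh-quotient argument from the proof of Theorem~\ref{thm:two} to this single-edge setting. Since $G=K_{t_1,\ldots,t_k}$ is a complete multipartite graph, its adjacency matrix has exactly one positive eigenvalue, so $\mathcal{E}(G)=2\lambda(G)$ (this follows from the fact that the nonzero eigenvalues of $A(G)$ coincide with those of its $k\times k$ quotient matrix $B_{\Pi}$, together with trace zero). For $G-e$, because the trace is still zero, we always have $\mathcal{E}(G-e)=2\sum_{\lambda_i(G-e)>0}\lambda_i(G-e)$, and if $\lambda_2(G-e)>0$ this gives the basic lower bound
\[
\mathcal{E}(G-e)\;\geq\;2\bigl(\lambda_1(G-e)+\lambda_2(G-e)\bigr).
\]
The hypothesis $\lambda_2(G-e)>a\geq x_1^2+x_2^2\geq 0$ guarantees $\lambda_2(G-e)>0$, so this bound is applicable. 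It therefore suffices to show $\lambda_1(G-e)+\lambda_2(G-e)>\lambda(G)$.

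To bound $\lambda_1(G-e)$ from below, I would apply the Rayleigh--Ritz inequality with the unit Perron vector ${\bf x}$ of $G$. If $A$ and $B$ are the adjacency matrices of $G$ and $G-e$ respectively, then $A-B$ is the matrix with $1$'s exactly at the two entries indexed by $\{u,v\}$, so ${\bf x}^T(A-B){\bf x}=2x_1x_2$. Hence
\[
\lambda_1(G-e)\;\geq\;{\bf x}^T B{\bf x}\;=\;\lambda(G)-2x_1x_2.
\]
Since the Perron vector has nonnegative real entries, AM--GM yields $2x_1x_2\leq x_1^2+x_2^2\leq a$. Combining with $\lambda_2(G-e)>a$, we obtain
\[
\lambda_1(G-e)+\lambda_2(G-e)\;>\;\bigl(\lambda(G)-2x_1x_2\bigr)+2x_1x_2\;=\;\lambda(G),
\]
which, multiplied by $2$, gives $\mathcal{E}(G-e)>\mathcal{E}(G)$.

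There is no substantial obstacle: the argument is a streamlined version of the one used in Theorem~\ref{thm:two}, with a single edge in place of the edge set $S$ and the pointwise estimate $2x_1x_2\leq x_1^2+x_2^2$ replacing the spectral bound on $A(H)$ produced there. The only care needed is to verify that $\lambda_2(G-e)>0$ before invoking the positive-eigenvalue sum identity, and this is immediate from $a\geq x_1^2+x_2^2\geq 0$. What this lemma really does is reduce the remaining work in Sections~3 and~4 to producing, for each candidate edge $e$, a single constant $a$ that simultaneously upper-bounds $x_1^2+x_2^2$ and lower-bounds $\lambda_2(G-e)$; the first via an explicit computation using the equitable partition of $K_{1,i,t_3,\ldots,t_k}$, and the second via the Interlacing Theorem with a suitably chosen induced subgraph of $G-e$.
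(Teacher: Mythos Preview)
Your proof is correct and follows essentially the same approach as the paper's own proof: use the Rayleigh--Ritz inequality with the unit Perron vector of $G$ to get $\lambda(G-e)\geq\lambda(G)-2x_1x_2\geq\lambda(G)-a$, then combine with $\lambda_2(G-e)>a$ and $\mathcal{E}(G)=2\lambda(G)$ to conclude $\mathcal{E}(G-e)\geq 2(\lambda(G-e)+\lambda_2(G-e))>2\lambda(G)=\mathcal{E}(G)$. The only cosmetic difference is that you keep $2x_1x_2$ as the loss term until the very end (bounding $\lambda_2(G-e)>a\geq 2x_1x_2$), whereas the paper replaces $2x_1x_2$ by $a$ one line earlier; the logic is identical.
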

\begin{proof} Let $A(G-e)=A(G)-C$, where $C$ is the adjacency matrix of the spanning subgraph of $G$ with only one edge $e$. Then  $${\bf x}^TC{\bf x}=2x_{1}x_{2}\leq x_1^2+x_2^2\leq a.$$
By the Rayleigh-Ritz theorem,
$$\lambda (G-e)\geq {\bf x}^TA(G-e){\bf x}={\bf x}^TA(G){\bf x}-{\bf x}^TC{\bf x}\geq \lambda (G)-a.$$
Because $\lambda_2(G-e)>a$, we  arrive at $$ \mathcal{E}(G-e)\geq 2(\lambda(G-e)+\lambda_2(G-e))>2\lambda(G)=\mathcal{E}(G). $$
\end{proof}

Next we will give a  lower  bound on the spectral radius of complete multipartite graph $K_{1,i,t_3 ,\ldots,t_k}$ which will be used in the calculation in the subsequent sections.
\begin{lemma}\label{lembound}
Let $G=K_{1,i,t_3 ,\ldots,t_k}$ be a complete multipartite graph with order $n$. We have:

\begin{enumerate}[(1).]
\item If  $k\geq 3$, then $\lambda(G)>\sqrt{(n-i)(i+1)}.$
\item  In particular, if $2\leq i\leq n-5$ and $\max\{t_3,\ldots,t_k\}=1$, then $\lambda(G)>n-i+0.67$ holds.
\end{enumerate}
\end{lemma}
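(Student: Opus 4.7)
The plan is to treat the two assertions separately: (1) by a spanning-subgraph reduction to the case $k = 3$ followed by a direct quotient-matrix calculation, and (2) by diagonalising a $2 \times 2$ quotient matrix and verifying a short numerical inequality. For (1), I first observe that merging the parts $V_3, \ldots, V_k$ of $G$ into a single part of size $n-i-1$ produces $G' = K_{1,i,n-i-1}$ whose edge set is contained in $E(G)$; the only edges lost are those running between distinct $V_p, V_q$ with $p, q \geq 3$. Perron--Frobenius monotonicity then gives $\lambda(G) \geq \lambda(G')$, so it suffices to prove $\lambda(G') > \sqrt{(n-i)(i+1)}$. The natural three-cell equitable partition of $G'$ has a quotient matrix whose characteristic polynomial reduces to
\begin{equation*}
p(x) = x^3 - [(i+1)(n-i)-1]\,x - 2 i (n-i-1).
\end{equation*}
Setting $\mu = \sqrt{(n-i)(i+1)}$ and using $\mu^2 = (i+1)(n-i)$ gives the clean identity $p(\mu) = \mu - 2i(n-i-1)$. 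Since $p$ is positive for large $x$ and $\lambda(G')$ is its largest root, the bound reduces to checking $\mu < 2i(n-i-1)$, i.e.\ $(n-i)(i+1) < 4 i^2 (n-i-1)^2$; this is elementary from $i \geq 2$ and $n-i-1 \geq 1$, both guaranteed since $k \geq 3$.

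For (2), the assumption $\max\{t_3,\ldots,t_k\} = 1$ forces every part other than the $i$-part to have size one, so the $n-i$ singleton-part vertices induce a clique $K_{n-i}$, each joined to every vertex of the independent set of size $i$; in other words $G \cong K_{n-i} \vee \overline{K_i}$. The partition splitting $V(G)$ into the clique side and the independent side is equitable, with quotient matrix
\begin{equation*}
\begin{pmatrix} n-i-1 & i \\ n-i & 0 \end{pmatrix},
\end{equation*}
whence
\begin{equation*}
\lambda(G) = \frac{(n-i-1) + \sqrt{(n-i-1)^2 + 4 i (n-i)}}{2}.
\end{equation*}
Isolating the square root and squaring, the required inequality $\lambda(G) > n-i+0.67$ becomes $(4i - 6.68)(n-i) > 4.4756$. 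With $i \geq 2$ the coefficient $4i-6.68 \geq 1.32 > 0$, and the hypothesis $i \leq n-5$ gives $n-i \geq 5$, so the product is at least $6.6$, which is the desired strict inequality.

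The main obstacle I anticipate is the numerical bookkeeping in (2): the constant $0.67$ is close to optimal at the extremal configuration $i = 2$, $n-i = 5$, so the squaring step must be carried out without slack to recover the final estimate. The quotient-matrix manipulations themselves, and the spanning-subgraph reduction in (1), should be entirely routine.
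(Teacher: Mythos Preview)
Your argument is correct and mirrors the paper's: for (1) you reduce to the spanning subgraph $K_{1,i,n-i-1}$ and evaluate the same cubic quotient polynomial at $\sqrt{(n-i)(i+1)}$ (the paper simply asserts this step is ``easy to see''), while for (2) you use the identical $2\times 2$ quotient matrix, the only cosmetic difference being that you solve the quadratic explicitly whereas the paper evaluates the characteristic polynomial at $n-i+0.67$. One small slip: $k\ge 3$ alone does not force $i\ge 2$; that hypothesis comes from the paper's standing convention on the second part size rather than from the number of parts.
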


\begin{proof}
 (1) The characteristic polynomial of the quotient matrix of $K_{1,i,n-i-1}$ is $$f(x)=x^3-\big ((n-i)(i+1)-1 \big )x-2(n-i-1)i.$$
It is easy to see that $\lambda(K_{1,i,n-i-1})>\sqrt{(n-i)(i+1)}$. Note that $K_{1,i,n-i-1}$ is a subgraph of $G$, so that, $\lambda(G)\geq \lambda(K_{1,i,n-i-1})>\sqrt{(n-i)(i+1)}$.

(2) If $\max\{t_3,\ldots,t_k\}=1$, then
  $$Q=\left(\begin{array}{rr}
0 & n-i \\
i & n-i-1 \\
\end{array}\right)$$ is a quotient matrix of $G$, so $\lambda(G)$ is the largest root of $\phi(Q,x)= x^2-(n-i-1)x-i(n-i)$.

If $2\leq i \leq n-5$, we have
$$
\begin{aligned}
\phi(Q,n-i+0.67)=&i^{2} -{\left(i - 1.67\right)} n - 1.67 \, i + 1.1189\\
\leq  & i^{2}  -{\left(i - 1.67\right)}  (i+5) - 1.67 \, i + 1.1189\\
= &9.4689-5i<0.
\end{aligned}
$$
Therefore, $\lambda(G)>n-i+0.67$.

\end{proof}

The following lemma provides some sufficient  (but not necessary)  conditions for $ \mathcal{E}(G-e)>\mathcal{E}(G ) $, and   is also a key tool which will  be widely used in the sequel proofs.
\begin{lemma}\label{lem:cond} Let $G=K_{1,i,t_3 ,\ldots,t_k}$ be a complete $k$-partite graph ($k\geq 3$) with order $n$, and $e$ be an edge between 1-part and $i$-part. Suppose that $a$ is a positive constant  and $\lambda_2(G-e)>a$.  If one of the following  holds:
\begin{enumerate}[(1).]
\item  $\frac{2i+1}{i(i+2)}<a<1$ and $f_{a}(n,i)=n\left( a i^2-2 (1-a)i-1 \right)-ai^3+(1-a)i^2-(a-2)i>0$,
\item  $\frac{2(n-1)}{\lambda^2+n-1}<a$, where $\lambda=\lambda(G)$,
\end{enumerate}
then   $ \mathcal{E}(G-e)>\mathcal{E}(G ) $.
\end{lemma}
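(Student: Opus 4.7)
By Lemma \ref{lem:rey}, it is enough to show $x_1^2+x_2^2 \le a$, where $x_1$ and $x_2$ are the Perron entries of the endpoints of $e$ (on the $1$-part and $i$-part respectively). The cells $V_1,\ldots,V_k$ form an equitable partition, so the eigenvalue equation $\lambda x_j = T - t_j x_j$ (with $T = \sum_v x_v$) gives
\[
x_1 = \frac{T}{\lambda+1}, \qquad x_2 = \frac{T}{\lambda+i},
\]
and in particular $x_1 \ge x_2$ because $i\ge 2$.

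For condition (2), I would proceed by a single Cauchy--Schwarz. Since $\lambda x_1 = T - x_1$ is the sum of the Perron entries over the $n-1$ neighbors of the $1$-part vertex,
\[
(\lambda x_1)^2 \;\le\; (n-1)\bigl(1 - x_1^2\bigr),
\]
which rearranges to $x_1^2 \le (n-1)/(\lambda^2+n-1)$. Combined with $x_2 \le x_1$, this yields $x_1^2+x_2^2 \le 2(n-1)/(\lambda^2+n-1) < a$, and Lemma \ref{lem:rey} closes the case.

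For condition (1), the plan is to replace the $\lambda$-dependent upper bound on $x_1^2+x_2^2$ by one depending only on $n$ and $i$. First I would use a $\lambda$-free estimate of $T^2$ (either $T^2 \le n$ from Cauchy--Schwarz on the normalized Perron vector, or the sharper $T^2 = (\lambda+1)^2 x_1^2 \le (\lambda+1)^2(n-1)/(\lambda^2+n-1)$) to obtain
\[
x_1^2+x_2^2 \;\le\; T^2\!\left(\frac{1}{(\lambda+1)^2}+\frac{1}{(\lambda+i)^2}\right),
\]
a quantity that is decreasing in $\lambda$. Then I would invoke Lemma \ref{lembound}(1), $\lambda^2 \ge (n-i)(i+1)$, to eliminate $\lambda$. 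After clearing denominators and collecting terms, the resulting inequality $x_1^2+x_2^2 \le a$ should simplify to $f_a(n,i) \ge 0$; the natural quantity $D := (n-i)(i+1)+(n-1)$ satisfies the identity $f_a(n,i) = aiD - \bigl(n(2i+1) - i(i+2)\bigr)$, which both explains why $a > (2i+1)/(i(i+2))$ is exactly what makes the coefficient $ai^2 - 2(1-a)i - 1$ of $n$ positive and suggests the groupings to perform in the calculation.

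The main obstacle is the algebra in case (1). The substitution $\lambda^2 = (n-i)(i+1)$ introduces irrational terms in $\lambda$ that must be removed, either by squaring carefully or — the cleaner route — by keeping the intermediate inequality as a polynomial in $\lambda$, verifying it is non-decreasing on $\bigl[\sqrt{(n-i)(i+1)},\infty\bigr)$, and checking non-negativity at the left endpoint, where the expression collapses into $f_a(n,i)$. Choosing the right Cauchy--Schwarz bound on $T^2$ is critical for the final polynomial to coincide with $f_a(n,i)$ rather than a strictly weaker variant.
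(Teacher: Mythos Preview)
Your argument for condition (2) is correct and matches the paper's: the same Cauchy--Schwarz bound $\lambda^2 x_1^2 \le (n-1)(1-x_1^2)$ followed by $x_2 \le x_1$ (the paper writes the exact ratio $\tfrac{\lambda+1}{\lambda+i}$ but then bounds it by $1$ anyway).

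For condition (1), your identity $f_a(n,i)=aiD-\bigl(n(2i+1)-i(i+2)\bigr)$ is correct and is in fact equivalent to the reformulation the paper actually uses, namely
\[
f_a(n,i)=(ai-1)(n-i)(i+1)-(1-a)(n-1)i,
\]
so that $f_a>0$ says precisely $(n-i)(i+1)>\dfrac{(1-a)(n-1)i}{ai-1}$. The gap is in how you propose to exploit this. Substituting $\lambda=\sqrt{(n-i)(i+1)}$ into $\dfrac{n-1}{\lambda^2+n-1}\bigl(1+(\tfrac{\lambda+1}{\lambda+i})^2\bigr)$ does \emph{not} collapse to $\dfrac{n(2i+1)-i(i+2)}{iD}$; one can check directly that the required identity $i(n-1)\bigl(\tfrac{\lambda_0+1}{\lambda_0+i}\bigr)^2=\lambda_0^2$ fails for generic $n,i$. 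So the ``should simplify to $f_a(n,i)\ge 0$'' step is wishful, and the monotonicity-in-$\lambda$ route you sketch is both unverified (the product of a decreasing and an increasing factor) and, even if true, would land on a different polynomial condition rather than $f_a$.

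The paper sidesteps all of this by never isolating $\lambda$ inside $(\lambda+1)/(\lambda+i)$. It keeps the Cauchy--Schwarz bound in the form $\lambda^2 x_1^2\le (n-1)(ix_2^2+m)$ with $m=\sum_{j\ge 3}t_jx_j^2$, uses $f_a>0$ together with $\lambda^2>(n-i)(i+1)$ only to get the scalar inequality $\tfrac{(n-1)i}{\lambda^2}<\tfrac{ai-1}{1-a}$, and then invokes the normalization $x_1^2+ix_2^2+m=1$ to obtain the self-referential bound
\[
x_1^2+x_2^2\;\le\;\frac{a}{1-a}\bigl((i-1)x_2^2+m\bigr)\;=\;\frac{a}{1-a}\bigl(1-x_1^2-x_2^2\bigr),
\]
which immediately gives $x_1^2+x_2^2\le a$. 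The key idea you are missing is to retain $m$ and feed the normalization back in, rather than eliminating everything in favor of $\lambda$.
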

\begin{proof}
  By Lemma \ref{lem:rey}, it suffices to prove $x_1^2+x_2^2\leq a$.

(1).
 Now $a>\frac{2i+1}{i(i+2)} $, which means $ai^2-2(1-a)i-1>0$. Combining this with $f_a(n,i)>0$, i.e., $n\geq \frac{ai^3-(1-a)i^2+(a-2)i}{ai^2-2(1-a)i-1}$, we can get
$\lambda^2(G)>(n-i)(i+1)\geq\frac{(n-1)(1-a)i}{ai-1},$  which yields  that
$$\frac{(n-1)i}{\lambda^2(G)}<\frac{ai-1}{1-a}.$$
From the eigenvalue equation of $G$, we have
$\lambda(G) x_1=ix_2+t_3x_3+\ldots+t_kx_k $.
  Applying the Cauchy-Schwarz inequality, we see that
 $$\lambda^2(G)x_1^2\leq (i+t_3+\ldots+t_k)(ix_2^2+t_3x_3^2+\ldots +t_kx_k^2)=(n-1)(ix_2^2+m)=(n-1)(1-x_1^2) ,$$
where $m=t_3x_3^2+\ldots+ t_kx_k^2$. Hence  $x_1^2\leq\frac{(n-1)}{\lambda^2(G)} \left( ix_2^2+m \right)\leq \frac{ai-1}{1-a}x_2^2+\frac{a}{1-a}m$. This shows that
$$x_1^2+x_2^2\leq \frac{a}{1-a}((i-1)x_2^2+ m)=\frac{a}{1-a}(1-x_1^2-x_2^2),$$ which implies that $x_1^2+x_2^2\leq a$ holds.

(2). By considering eigenvalue equations $\lambda x_1=ix_2+t_3x_3+\ldots+t_kx_k $ and $\lambda x_2=x_1+t_3x_3+\ldots+t_kx_k $,  we find $x_2=(\frac{\lambda+1}{\lambda+i})x_1$.  From $\lambda^2x_1^2\leq (n-1)(1-x_1^2) $, we have
$$x_1^2\leq \frac{n-1}{\lambda^2+n-1}.$$

Therefore,
$$x_{1}^2+x_{2}^2=\left(1+\bigg(\frac{\lambda+1}{\lambda+i}\bigg)^2\right)x_1^2\leq \left(1+\bigg(\frac{\lambda+1}{\lambda+i}\bigg)^2\right)\frac{n-1}{\lambda^2+n-1}< \frac{2(n-1)}{\lambda^2+n-1}<a.$$
\end{proof}

\section{The complete multipartite graph with at least four parts}

In this section, we consider how the energy changes of the complete multipartite graph $K_{1,i,t_3 ,\ldots,t_k}$, where $k\geq 4$, by deleting an edge between 1-part and $i$-part. We will distinguish into two cases  $i\geq 4$ and $i\in\{2,3\}$, and will apply the two methods in Lemma \ref{lem:cond} to compare the energies of $K_{1,i,t_3 ,\ldots,t_k}-e$ and $K_{1,i,t_3 ,\ldots,t_k}$.  Now we consider the case of $i\geq 4$ firstly.
\begin{lemma}\label{lem:four}  If  $k\geq 4$ and $   i\geq 4$, then $\mathcal{E}(K_{1,i,t_3 ,\ldots,t_k}-e)>\mathcal{E}(K_{1,i,t_3 ,\ldots,t_k})$ for any edge $e$ between $1$-part and $i$-part.
\end{lemma}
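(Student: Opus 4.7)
My plan is to invoke Lemma~\ref{lem:cond} for the bulk of the cases, and to fall back on Lemma~\ref{lem:rey} for the extremal family $G = K_{1,i,1,1}$ with $i \in \{4, 5\}$, where the bounds built into Lemma~\ref{lem:cond} are too tight. In every case I need a constant $a$ with $\lambda_2(G-e) > a$; Lemma~\ref{lem:cond} supplies two sufficient inequalities for $a$, while Lemma~\ref{lem:rey} needs only the stronger $x_1^2 + x_2^2 \leq a$.

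The first step is to lower-bound $\lambda_2(G-e)$ uniformly. Since $i \geq 4$ and $k \geq 4$, I can pick the endpoints $u, v$ of~$e$, three additional vertices in $V_2 \setminus \{v\}$, and one vertex from each of~$V_3, V_4$; these seven vertices induce $K_{1,4,1,1} - e$ in~$G - e$. Using the equitable partition $\{u\}, \{v\}, V_2 \setminus \{v\}, \{w_3, w_4\}$, the $4 \times 4$ quotient polynomial factors as $(x^2 + 2x - 1)(x^2 - 3x - 6)$, whose second-largest root is $\sqrt{2} - 1$; hence by Lemma~\ref{Intelacing}, $\lambda_2(G-e) \geq \sqrt{2} - 1$.

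For the generic case I would pick $a$ just below $\sqrt{2} - 1$ and verify whichever of conditions~(1),~(2) applies. The threshold $(2i+1)/(i(i+2)) \leq 3/8$ in condition~(1) is automatic for $i \geq 4$. If $G = K_{1, i, 1, 1}$ with $i \geq 6$, a direct calculation shows $f_a(i + 3, i) = (4a-1) i^2 + 5(a-1) i - 3 > 0$ for a suitable $a < \sqrt{2} - 1$, so condition~(1) settles the case. If $G \neq K_{1, i, 1, 1}$, then $n \geq i + 4$ and the spectral radius grows enough --- quantifiable either by Lemma~\ref{lembound} (via the $n - i + 0.67$ bound, once it applies) or by computing the quotient matrix of $G$ directly using the twin structure --- that $\frac{2(n-1)}{\lambda(G)^2 + n - 1} < \sqrt{2} - 1$, and condition~(2) applies.

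The main obstacle is the exceptional family $G = K_{1, i, 1, 1}$ with $i \in \{4, 5\}$. Here I would apply Lemma~\ref{lem:rey} with $a = x_1^2 + x_2^2$. Since $\{u, c, d\}$ (with $c \in V_3$ and $d \in V_4$) is a true-twin class of $K_{1, i, 1, 1}$, the Perron vector satisfies $x_1 = x_3 = x_4$; the eigenvalue equations then yield $\lambda(G) = 1 + \sqrt{1+3i}$ and $x_2 = 3 x_1 / \lambda$. Normalization produces the closed form $x_1^2 + x_2^2 = (\lambda^2 + 9)/(3\lambda^2 + 9i)$. Setting $t = \sqrt{1+3i}$, the inequality $x_1^2 + x_2^2 < \sqrt{2} - 1$ reduces to $(6\sqrt{2} - 7) t^2 + (6\sqrt{2} - 8) t - 10 > 0$, which one verifies holds for all $t \geq \sqrt{13}$, i.e., $i \geq 4$. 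Lemma~\ref{lem:rey} then gives $\mathcal{E}(G - e) > \mathcal{E}(G)$, completing the proof.
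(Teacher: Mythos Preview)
Your interlacing step and the explicit treatment of $K_{1,i,1,1}$ for $i\in\{4,5\}$ via Lemma~\ref{lem:rey} are both correct; the latter is a pleasant hands-on alternative to the paper's computer check for those two graphs. But the middle case ``$G\neq K_{1,i,1,1}$'' has a real gap.

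First, you artificially restrict condition~(1) of Lemma~\ref{lem:cond} to the single graph $K_{1,i,1,1}$. Since the coefficient of $n$ in $f_a(n,i)$ is positive once $a>\tfrac{2i+1}{i(i+2)}$, the function $f_a(\cdot,i)$ is increasing, so $f_a(n,i)\ge f_a(i+3,i)>0$ already settles \emph{every} $G$ with $i\ge 6$; this is exactly what the paper does (with $a=0.4$). Your detour through condition~(2) for large $i$ is unnecessary.

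Second, and more seriously, for $i\in\{4,5\}$ and $G\neq K_{1,i,1,1}$ your appeal to condition~(2) is not substantiated. Lemma~\ref{lembound}(2) requires $\max\{t_3,\dots,t_k\}=1$ \emph{and} $i\le n-5$, so it misses for instance $K_{1,i,1,1,1}$ as well as the infinitely many graphs $K_{1,4,t_3,\dots}$ with some $t_j\ge 2$. The bound in Lemma~\ref{lembound}(1) gives only $\lambda^2>(n-i)(i+1)$, and for $i=4$ this fails to force $\tfrac{2(n-1)}{\lambda^2+n-1}<\sqrt 2-1$ until $n\ge 14$. ``Computing the quotient matrix of $G$ directly'' cannot close an infinite family. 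The paper instead stays with condition~(1): choosing $a=0.414$ one gets $f_a(n,4)>0$ for $n\ge 12$ and $f_a(n,5)>0$ for $n\ge 9$, leaving only finitely many small graphs (then verified by machine). You need either that route or a uniform spectral-radius bound strong enough to make condition~(2) work for all such $G$; neither is supplied.
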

\begin{proof}
As $k\geq 4$, $K_{1,4,1,1}-e$ is an induced subgraph of $K_{1,i,t_3 ,\ldots,t_k}-e$, by the  Interlacing Theorem  $\lambda_2(K_{1,i,t_3 ,\ldots,t_k}-e)\geq \lambda_2(K_{1,4,1,1}-e)= \sqrt{2}-1>0.4$ holds.

Since $\frac{2i+1}{i(i+2)}$ is a decreasing function for $i$, we have $\frac{2i+1}{i(i+2)} \leq  \frac{3}{8} <0.4 $ for $i \geq 4$.
Now we   use Lemma \ref{lem:cond} by taking $a=0.4$, then
$$5f_{0.4}(n,i)\geq 5f_{0.4}(i+3,i)=3(i^2-5i-5)>0 $$ holds for all $i\geq 6$. Hence, $ \mathcal{E}(K_{1,i,t_3 ,\ldots,t_k}-e)>\mathcal{E}(K_{1,i,t_3 ,\ldots,t_k} ) $ holds when $i\geq6$.

Because $f_{0.414}(n,4)>0$  when $n \geq 12$ and $f_{0.414}(n,5)>0 $ when $n \geq 9$, and these  show  that  $ \mathcal{E}(K_{1,i,t_3 ,\ldots,t_k}-e)>\mathcal{E}(K_{1,i,t_3 ,\ldots,t_k} ) $ holds for $i=4,\ 5$ when $n\geq 12$ and $n\geq 9$, respectively.
With the aid of mathematics software ``SageMath'' \cite{sage}, one can verify the result holds  for $i=4,\  n\leq 11$ and $i=5,\ n\leq 8$.
\end{proof}

The next lemma offers a method   to compare the spectral radius of two complete multipartite graphs with the same order, which will be used in the proof of Lemma \ref{lem:ftt}.

\begin{lemma}(\cite{Dragan})\label{multi} If $n_i-n_j \geq 2$, then $\lambda (K_{n_1,\ldots,n_{i}-1,\ldots,n_{j}+1,\ldots,n_p}) > \lambda (K_{n_1 ,\ldots,n_i,\ldots,n_j,\ldots,n_p})$.
\end{lemma}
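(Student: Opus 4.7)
The plan is to reduce the claim to a strict convexity inequality by exploiting the equitable partition of the complete multipartite graph into its parts. Write $G = K_{n_1, \ldots, n_p}$, $G' = K_{n_1, \ldots, n_i-1, \ldots, n_j+1, \ldots, n_p}$, and $\lambda = \lambda(G) > 0$ (positivity coming from connectedness and Perron--Frobenius).

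First I would derive the characteristic equation satisfied by $\lambda$. Using that $V_1 \cup \cdots \cup V_p$ is an equitable partition, the Perron eigenvector takes a constant value $x_r > 0$ on each $V_r$. The eigenvalue equation $\lambda x_r = \sum_{s \neq r} n_s x_s$, combined with $S := \sum_s n_s x_s$, yields $(\lambda + n_r)x_r = S$, so $x_r = S/(\lambda + n_r)$. Substituting back and dividing by $S > 0$ gives the key identity
$$F(\mu;n_1,\ldots,n_p) \; := \; \sum_{s=1}^p \frac{n_s}{\mu + n_s} \;-\; 1 \;=\; 0, \qquad \mu = \lambda.$$
On $(0,\infty)$ the map $\mu \mapsto F(\mu;\cdot)$ is strictly decreasing, and the same derivation shows that $\lambda(G')$ is the unique positive root of $F(\mu;n_1,\ldots,n_i-1,\ldots,n_j+1,\ldots,n_p)$.

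The second step is to compare the two equations at $\mu = \lambda$. To prove $\lambda(G') > \lambda$, it suffices to show $F(\lambda;n_1,\ldots,n_i-1,\ldots,n_j+1,\ldots,n_p) > 0$. Subtracting $F(\lambda;n_1,\ldots,n_p) = 0$ collapses everything to the $i$- and $j$-terms, and the substitution $t/(\lambda+t) = 1 - \lambda/(\lambda+t)$ converts the required inequality into
$$\frac{1}{\lambda + n_i} + \frac{1}{\lambda + n_j} \;>\; \frac{1}{\lambda + (n_i - 1)} + \frac{1}{\lambda + (n_j + 1)}.$$

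The final step is convexity. The function $g(t) = 1/(\lambda + t)$ satisfies $g''(t) = 2/(\lambda+t)^3 > 0$ for $t \geq 0$, so it is strictly convex. The pairs $(n_i, n_j)$ and $(n_i - 1, n_j + 1)$ have the same sum, and the hypothesis $n_i - n_j \geq 2$ gives $n_i > n_i - 1 \geq n_j + 1 > n_j$, so the first pair is a strict spread of the second; strict convexity then delivers the displayed inequality. The only technical nuisance is justifying that the positive root of $F$ really equals $\lambda(G)$, but this follows from the equitable-partition result already cited in the paper: the quotient matrix $B$ of the partition satisfies $\det(\mu I - B) = \prod_s (\mu + n_s) \cdot F(\mu;n_1,\ldots,n_p)$ by the matrix determinant lemma, so its largest eigenvalue coincides with the largest root of $F$. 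I expect the convexity inequality to be the substantive step; everything else is bookkeeping.
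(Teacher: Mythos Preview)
The paper does not supply its own proof of this lemma; it is quoted verbatim from the cited reference \cite{Dragan} and used as a black box. So there is nothing in the paper to compare against, and your argument stands on its own.

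Your proof is correct. The derivation of the secular equation $\sum_s n_s/(\mu+n_s)=1$ from the equitable partition is standard and matches the approach in the original reference; the reduction to a two--term inequality and the strict convexity of $t\mapsto 1/(\lambda+t)$ finish it cleanly. Two small points worth making explicit in a final write-up: (i) the hypothesis $n_i-n_j\ge 2$ is used precisely to guarantee $n_i-1\ge n_j+1$, so that $(n_j,n_i)$ genuinely majorizes $(n_j+1,n_i-1)$ and the strict convexity inequality applies even in the boundary case $n_i-1=n_j+1$; (ii) the sign in the matrix determinant lemma computation gives $\det(\mu I-B)=-\prod_s(\mu+n_s)\,F(\mu)$, which does not affect the conclusion since on $(0,\infty)$ the zeros of the two sides coincide. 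Otherwise the argument is complete.
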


\begin{lemma}\label{lem:ftt} $ \mathcal{E}(K_{1,i,t_3 ,\ldots,t_k}-e)>\mathcal{E}(K_{1,i,t_3 ,\ldots,t_k} ) $ for any $k\geq 4$,   $i\in\{2,3\}$ and every edge $e$ between $1$-part and $i$-part.
\end{lemma}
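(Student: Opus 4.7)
The plan is to apply Lemma \ref{lem:cond} in the spirit of Lemma \ref{lem:four}. Because the thresholds $\frac{2i+1}{i(i+2)}$ in that lemma's part (1) are $5/8$ for $i=2$ and $7/15$ for $i=3$---too high to be matched by $\lambda_2$ of any induced subgraph of $G-e$ that we can control---the argument will lean almost entirely on part (2), combined with the sharpened spectral radius bound from Lemma \ref{lembound}(2).

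First, since $k\ge 4$, the graph $K_{1,i,1,1}-e$ is an induced subgraph of $G-e$ for both $i\in\{2,3\}$. Using the equitable partition $\{u\},\{v_1\},\{v_2,\ldots,v_i\},\{w_1,w_2\}$, the characteristic polynomial of the quotient is computable in closed form: for $i=3$ it factors as $x^4-x^3-10x^2-6x+4=(x+2)(x^3-3x^2-4x+2)$, so $\lambda_2(K_{1,3,1,1}-e)$ is the middle root of the cubic (numerically $\approx 0.398$); for $i=2$ the relevant polynomial is $x^3-2x^2-5x+2$, with middle root $\approx 0.357$. By the Interlacing Theorem these bound $\lambda_2(G-e)$ from below.

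I would then partition into Case A ($\max\{t_3,\ldots,t_k\}=1$, so $G=K_{1,i,1,\ldots,1}$) and Case B ($\max\{t_3,\ldots,t_k\}\ge 2$). In Case A, Lemma \ref{lembound}(2) yields $\lambda(G)>n-i+0.67$; substituting into Lemma \ref{lem:cond}(2) with $a$ taken as the lower bound on $\lambda_2(G-e)$ from the previous step, one obtains an explicit threshold $n_0(i)$ above which the condition $\frac{2(n-1)}{\lambda^2+n-1}<a$ holds, completing Case A in that range. In Case B, the extra vertex in some $V_j$ allows either a sharper lower bound on $\lambda(G)$ (via Lemma \ref{multi}, since balancing part sizes increases the spectral radius) or a richer induced subgraph of $G-e$ with higher $\lambda_2$ (for instance $K_{1,i,1,\ldots,1}-e$ with additional 1-parts, whose second eigenvalue is larger than that of $K_{1,i,1,1}-e$). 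Where the standard bound is still too loose, one can invoke the slightly sharper version of Lemma \ref{lem:cond}(2) that retains the factor $1+\bigl(\tfrac{\lambda+1}{\lambda+i}\bigr)^2$ appearing in its proof, rather than bounding it crudely by $2$.

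The residual small cases---most importantly $G=K_{1,2,1,1}$ and $G=K_{1,3,1,1}$ at $k=4$ (Case A), plus at most a handful of other small configurations in Case B---are handled by direct eigenvalue computation in SageMath, following the pattern of Lemma \ref{lem:four}. The main obstacle is that the tight thresholds force a more delicate case analysis than in the $i\ge 4$ setting: choosing $n_0(i)$ correctly and enumerating the exception list so that every $K_{1,i,t_3,\ldots,t_k}$ is either covered by an asymptotic inequality or verified numerically is the principal technical burden of the proof.
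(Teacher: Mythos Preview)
Your proposal is correct and follows essentially the same route as the paper: split according to whether $\max\{t_3,\ldots,t_k\}=1$ or $\ge 2$, bound $\lambda(G)$ below via Lemma~\ref{lembound}(2) in the first case and via Lemma~\ref{multi} (reducing to $K_{1,2,2,n-5}$) in the second, feed this into Lemma~\ref{lem:cond}(2), and clear the finitely many small-$n$ cases by direct computation. The paper streamlines slightly by using the single value $a=0.357<\lambda_2(K_{1,2,1,1}-e)$ for both $i=2$ and $i=3$ and a uniform threshold $n\ge 8$, so the refinements you mention (separate $a$ for each $i$, retaining the factor $1+\bigl(\tfrac{\lambda+1}{\lambda+i}\bigr)^2$) turn out to be unnecessary.
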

\begin{proof} For short,  we write  $ K_{1,i,t_3 ,\ldots,t_k}$ as $G$.

 By Lemma \ref{lembound}, when $n\geq 8$, if $\max\{t_3,\ldots,t_k\}=1$, $\lambda(G)>n-2.33$ holds, which implies that $\frac{2(n-1)}{\lambda^2+n-1}<\frac{2(n-1)}{(n-2.33)^2+n-1}$.   Note that $\frac{2(n-1)}{(n-2.33)^2+n-1}< 0.357$  when $n\geq 8$. Hence, $\frac{2(n-1)}{\lambda^2+n-1}< 0.357$

If $\max\{t_3,\ldots,t_k\}\geq 2$, say $t_3\geq2$, then $K_{1,i,t_3,n-i-t_3-1}$ is a subgraph of $G$, so
  $$\lambda(G)\geq \lambda(K_{1,i,t_3,n-i-t_3-1})\geq \lambda(K_{1,i,2,n-i-3})\geq \lambda(K_{1,2,2,n-5}),$$
  the last two inequalities follow  from Lemma \ref{multi}. Note that $\lambda(K_{1,2,2,n-5}) $
   is the largest root of $ g(x)=x^4-(5n-17)x^2-8(2n-9)x-6(2n-10)$ which is the characteristic polynomial of its equitable matrix. It is easy to check that $\tau(g)>\sqrt{5n-7 }$ when $n\geq 8$, which means  $\lambda(G)>\sqrt{5n- 7 }$.
 And thus,  $\frac{2(n-1)}{\lambda^2+n-1}< \frac{2(n-1)}{6n-8}< 0.357$, when $n\geq 8$.

 On the other hand, $\lambda_2(G-e)\geq \lambda_2(K_{1,2,1,1}-e) > 0.357$.  Now we use (2) of Lemma \ref{lem:cond} by taking $a=0.357$, we have $ \mathcal{E}(G-e)>\mathcal{E}(G) $.

With the aid of mathematics software ``SageMath'' \cite{sage}, one can verify the result holds   for $G$  when $n\leq 7$.
   \end{proof}

The following proposition is an immediate result from Lemma \ref{lem:four} and Lemma \ref{lem:ftt}.
\begin{proposition}\label{pro:mult} $\mathcal{E}(K_{1,i,t_3,\ldots,t_k}-e)>\mathcal{E}(K_{1,i,t_3,\ldots,t_k})$ holds for any $k\geq 4,\ i\geq 2$ and $e$   between 1-part and $i$-part.
\end{proposition}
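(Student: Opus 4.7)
The plan is almost entirely bookkeeping: the two preceding lemmas, Lemma \ref{lem:four} and Lemma \ref{lem:ftt}, between them already cover every admissible value of $i$ under the hypothesis $k\geq 4$, so the proposition reduces to assembling these two cases.

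Concretely, I would first observe that the hypothesis $i\geq 2$ forces either $i\in\{2,3\}$ or $i\geq 4$, and that both cases assume $k\geq 4$, which is exactly the common hypothesis of the two lemmas. For $i\geq 4$, I would invoke Lemma \ref{lem:four} directly on the graph $K_{1,i,t_3,\ldots,t_k}$ and on the edge $e$ between the $1$-part and the $i$-part, which yields $\mathcal{E}(K_{1,i,t_3,\ldots,t_k}-e)>\mathcal{E}(K_{1,i,t_3,\ldots,t_k})$. For $i\in\{2,3\}$, I would instead invoke Lemma \ref{lem:ftt}, which gives the same inequality under precisely these constraints. Since the two ranges of $i$ together exhaust $i\geq 2$, combining them proves the proposition.

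There is no real obstacle here; the work has already been done in Lemmas \ref{lem:four} and \ref{lem:ftt}. The only thing one might worry about is whether the hypotheses in the two lemmas match the proposition's hypotheses verbatim (same graph family $K_{1,i,t_3,\ldots,t_k}$, same class of deleted edges $e$ between $1$-part and $i$-part, same lower bound $k\geq 4$), and a quick comparison confirms that they do. Accordingly, the proof is just a one-line case split followed by citing the appropriate lemma in each case.
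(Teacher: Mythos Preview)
Your proposal is correct and matches the paper's approach exactly: the paper simply states that the proposition is an immediate consequence of Lemma~\ref{lem:four} (for $i\geq 4$) and Lemma~\ref{lem:ftt} (for $i\in\{2,3\}$), which is precisely the case split you describe.
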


\section{Complete tripartite graph}

In this section, we will focus on the energy change of the complete tripartite graph $K_{1,i,n-i-1}$. We distinguish into two cases: $ 4\leq i\leq n-3$ and $i\in\{2,3,n-2\}$. The proof of the first   case  is similar to the proof of Lemma \ref{lem:four}. But for the case  $i\in\{2,3,n-2\}$, it is almost impossible to use the former  method, so we will give another new energy comparison method.

\begin{lemma}\label{lem:three}
If $ 4\leq i\leq n-3$, then  $\mathcal{E}(K_{1,i,n-i-1}-e)>\mathcal{E}(K_{1,i,n-i-1})$,  for any edge $e$    between 1-part and $i$-part.
\end{lemma}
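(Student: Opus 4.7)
The plan is to mirror Lemma~\ref{lem:four}: apply Lemma~\ref{lem:cond} to $G=K_{1,i,n-i-1}$ with an appropriately chosen constant $a$. First I would write down the spectrum of $G-e$ exactly. With $p=i-1$ and $q=n-i-1$, the partition of $V(G-e)$ into $\{u\}$, $\{v\}$, the $p$ remaining vertices of the $i$-part, and the $q$ vertices of the $(n-i-1)$-part is equitable; its $4\times 4$ quotient matrix has characteristic polynomial
$$h(x)=x^4-(pq+p+2q)x^2-2pq\,x+pq,$$
and all other eigenvalues of $G-e$ are zero. A quick sign analysis (using that the coefficient of $x^3$ vanishes and $h(0)=pq>0$) shows that the four nonzero roots of $h$ split into two positive and two negative numbers, so $\mathcal{E}(G-e)=2(\lambda_1(G-e)+\lambda_2(G-e))$ and the condition $\lambda_2(G-e)>a$ is equivalent to $h(a)>0$ for any $a$ with $0<a<\lambda_1(G-e)$.

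The key identity is $h(a)=pq(1-2a-a^2)-a^2(p+2q)+a^4$. For any $a<\sqrt 2-1$ the coefficient $1-2a-a^2$ is positive, so $h(a)>0$ whenever $p$ and $q$ are both sufficiently large; this brings Lemma~\ref{lem:cond}(1) within reach in the ``bulk'' regime. Concretely I would pick $a$ just under $\sqrt 2-1$ (e.g.\ $a=0.4$), verify $\tfrac{2i+1}{i(i+2)}<a$ (which holds for all $i\geq 4$), and check $f_a(n,i)>0$ by the same type of linear-in-$n$ bound used in the proof of Lemma~\ref{lem:four}.

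For the ``thin'' regime where one of $p,q$ is small, method~(1) loses its slack because $\lim_{q\to\infty}\lambda_2(K_{1,p+1,q}-e)=\tfrac{-p+\sqrt{2p^2+2p}}{p+2}$ is only marginally above $\tfrac{2i+1}{i(i+2)}$. Here I would switch to Lemma~\ref{lem:cond}(2): combining $\lambda(G)^2>(n-i)(i+1)$ from Lemma~\ref{lembound}(1) with a suitable $a$ shows $\tfrac{2(n-1)}{\lambda^2+n-1}<a$ once $n$ exceeds an explicit constant, after which Lemma~\ref{lem:cond}(2) applies. Any finitely many pairs $(i,n)$ outside both regimes I would verify by direct spectral computation in SageMath, as in Lemmas~\ref{lem:four} and~\ref{lem:ftt}.

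The main obstacle will be the case $i=4$: here $\lambda_2(K_{1,4,n-5}-e)\to\tfrac{-3+2\sqrt 6}{5}\approx 0.380$ while $\tfrac{2i+1}{i(i+2)}=\tfrac{3}{8}=0.375$, so there is essentially no gap and the choice of $a$ is delicate. Getting the two methods of Lemma~\ref{lem:cond} to cover complementary $n$-ranges while leaving only a short list of cases for SageMath is where the technical work of this lemma lies.
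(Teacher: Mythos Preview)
Your plan is correct and mirrors the paper's proof in structure: both split into a large-$i$ range handled by Lemma~\ref{lem:cond}(1), a small-$i$ range handled by Lemma~\ref{lem:cond}(2) via the bound $\lambda(G)^2>(n-i)(i+1)$ from Lemma~\ref{lembound}(1), and a finite SageMath check for the remainder. The paper's concrete cut is $i\geq 8$ with $a=\tfrac{11}{30}$ for method~(1), $4\leq i\leq 7$ with $a=0.36$ for method~(2) (valid once $n\geq 35$), and direct verification for $n\leq 34$.

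The one place the paper is leaner is the lower bound on $\lambda_2(G-e)$. Rather than analyse the quartic $h(x)$ and track $h(a)$ as a function of $(p,q)$, the paper simply notes that $K_{1,5,2}-e$ (respectively $K_{1,4,2}-e$) sits inside $G-e$ as an induced subgraph and applies interlacing to get the uniform constants $\lambda_2(G-e)\geq\lambda_2(K_{1,5,2}-e)>\tfrac{11}{30}$ and $\lambda_2(G-e)\geq\lambda_2(K_{1,4,2}-e)>0.36$. This bypasses your asymptotic computation of $\lim_{q\to\infty}\lambda_2$ and the delicate window at $i=4$ entirely: with $a=0.36$ the interlacing bound already covers all $4\le i\le 7$, and method~(2) then only needs $n\ge 35$. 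Your explicit-polynomial route is valid, but the interlacing shortcut gives a shorter argument with no two-parameter bookkeeping.
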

\begin{proof}
If $i\geq 8$, with the similar manner of Lemma \ref{lem:four},
 $K_{1,5,2}-e$ is an induced subgraph of $K_{1,i,n-i-1}-e$, by the  Interlacing Theorem, $\lambda_2(K_{1,i,n-i-1}-e)\geq \lambda_2(K_{1,5,2}-e) > \frac{11}{30}$.

Taking $a=\frac{11}{30}$, we find $f_{a}(n,i)$ is a strictly increasing function for $n$. Since $n\geq i+3$,  we easily have that
$$30 f_{a}(n,i)\geq 30 f_{a}(i+3,i)= 14 i^2 - 95i - 90>0 $$ holds when $i\geq 8$. Since $\frac{2i+1}{i(i+2)} \leq  \frac{17}{80} <a $ for  $i\geq 8$, by (1) of Lemma \ref{lem:cond}, we have $ \mathcal{E}(K_{1,i,n-i-1  }-e)>\mathcal{E}(K_{1,i,n-i-1  } ) $.

When $4\leq i\leq 7$, we take $a= 0.36 < \lambda_2(K_{1,4,2}-e)$.  Lemma \ref{lembound} provides that
$\frac{2(n-1)}{\lambda^2+n-1}<\frac{2(n-1)}{(n-i)(i+1)+n-1}< \lambda_2(K_{1,4,2}-e)$ for $n \geq 35$. Since $\lambda_2(K_{1,i,n-i-1}-e)>\lambda_2(K_{1,4,2}-e)$, $\mathcal{E}(K_{1,i,n-i-1}-e)>\mathcal{E}(K_{1,i,n-i-1})   $ follows from (2) of Lemma \ref{lem:cond}  .

With the aid of mathematics software ``SageMath'' \cite{sage}, one can verify the result holds for $n \leq 34$.
\end{proof}

Next we will consider how the energy changes of $K_{1,n-2,1}$, $K_{1,2,n-3}$ and $K_{1,3,n-4}$ by deleting one edge between the first two parts.  For convenience, we use $\tau(f)$ to denote the largest real root of the equation  $f(x)=0$ if it exists.

The following is a lemma about the largest root of equation which will be used in the proof of our last lemma.
\begin{lemma}\label{lem:roots} Let $f(x)=x^4+a x^2 + b x +c $ and $g(x)=x^{6} + 8 a x^{4} + 16 (a^{2} - 4 c) x^{2} - 64 b^{2}$. If all roots of the equation  $f(x)=0 $ are real,
\begin{enumerate}[(1).]
\item  then $g(x)=0$  has only real roots.
\item in particular, if $f(x)=0$ has  exactly two positive roots, then $\mathcal{E}(f)=\tau(g)$.
\end{enumerate}
\end{lemma}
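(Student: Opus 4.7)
The plan is to exploit the absence of an $x^3$ term in $f$: the four real roots $r_1,r_2,r_3,r_4$ of $f(x)=0$ must sum to zero. Consequently, for any partition of the roots into two pairs $\{r_i,r_j\}\cup\{r_k,r_l\}$, the two pair-sums are negatives of each other, and $f$ admits a factorization
\[
f(x) = (x^2 - sx + p)(x^2 + sx + q),
\]
where $s=r_i+r_j$ is one of the pair-sums, $p=r_ir_j$, $q=r_kr_l$. Expanding and comparing with $f(x)=x^4+ax^2+bx+c$ yields the three relations $p+q=a+s^2$, $pq=c$, and $s(p-q)=b$. Combining $(p+q)^2-(p-q)^2=4pq$ with these, one derives in a single line the identity
\[
s^2(s^2+a)^2 \;-\; 4cs^2 \;-\; b^2 \;=\; 0.
\]
A direct expansion shows this is exactly $\tfrac{1}{64}g(2s)=0$, so $\pm 2s$ are both roots of $g$ (the polynomial $g$ being even).

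There are three ways to pair the four roots, producing pair-sums $s_1=r_1+r_2$, $s_2=r_1+r_3$, $s_3=r_1+r_4$, and hence six real values $\pm 2s_k$ ($k=1,2,3$) that are roots of $g$. Since $\deg g=6$, these exhaust the roots of $g$, proving (1). In the degenerate case where some $s_k=0$, the relation $s(p-q)=b$ forces $b=0$, so $g(0)=-64b^2=0$ is consistent and the paired roots $\pm 2s_k$ simply coalesce at $0$.

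For (2), the assumption of exactly two positive roots lets me label them so that $r_1\geq r_2>0\geq r_3\geq r_4$. Using $\sum_i r_i=0$,
\[
\mathcal{E}(f) \;=\; r_1+r_2-r_3-r_4 \;=\; 2(r_1+r_2).
\]
Since the roots of $g$ are $\pm 2 s_k$, one has $\tau(g)=2\max_k|s_k|$, so it suffices to check $r_1+r_2\geq |r_1+r_j|$ for $j=3,4$. A short case split on the sign of $r_1+r_j$, rewriting $-r_1-r_j=r_{j'}+r_{k'}$ where $\{j',k'\}=\{2,3,4\}\setminus\{j\}$ via the zero-sum relation, reduces each inequality to a transparent consequence of the ordering $r_1\geq r_2>0\geq r_3\geq r_4$. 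Therefore $\tau(g)=2(r_1+r_2)=\mathcal{E}(f)$.

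The main obstacle I anticipate is isolating and verifying the algebraic identity $s^2(s^2+a)^2-4cs^2-b^2=0$ from the three coefficient-comparison equations; once that is in hand, both the degree-counting argument for (1) and the elementary sign inequalities for (2) amount to routine bookkeeping.
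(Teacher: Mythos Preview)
Your proof is correct and follows essentially the same route as the paper's: both arguments use the vanishing of the cubic coefficient to see that the six doubled pair-sums $2(r_i+r_j)$ are the roots of $g$, and then identify $\tau(g)$ with $2(r_1+r_2)$ in part~(2). The only cosmetic difference is packaging---you derive the key identity via the factorization $f(x)=(x^2-sx+p)(x^2+sx+q)$ and the relation $(p+q)^2-(p-q)^2=4pq$, whereas the paper works directly from Vieta's formulas with $y=2(x_1+x_2)$---and you supply a bit more detail than the paper does in verifying that $r_1+r_2$ dominates the other $|s_k|$.
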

 \begin{proof} Let $x_1,x_2,x_3,x_4$ be the four real roots of $f(x)=0$, then
 \begin{align}
x_{1} + x_{2} + x_{3} + x_{4}&=0\\
x_{1} x_{2} + x_{1} x_{3} + x_{2} x_{3} + x_{1} x_{4} + x_{2} x_{4} + x_{3} x_{4}&=a\\
x_{1} x_{2} x_{3} + x_{1} x_{2} x_{4} + x_{1} x_{3} x_{4} + x_{2} x_{3} x_{4}&=-b\\
x_{1} x_{2} x_{3} x_{4}& =c.
\end{align}

 (1) Put $y=2(x_1+x_2)$. By formulas (1) and (2), we see that $$x_1x_2+x_3x_4=a+(x_1+x_2)^2=a+\frac{y^2}{4}.$$ On the other hand,
 $$y(x_1x_2-x_3x_4)=2b$$ follows from formulae (1) and (3).
Then $(a+\frac{y^2}{4})^2y^2- 4b^2=4x_1x_2x_3x_4y^2=4 cy^2$ which yields
$$ y^{6} + 8 a y^{4} + 16 (a^{2} - 4 c) y^{2} - 64 b^{2} = 0. $$
That is to say, $2(x_1+x_2)$ is a root of $g(x)=0$. From the symmetry of $x_1,\ x_2,\ x_3,\ x_4$, we know that  $ 2(x_1+x_3),  2(x_1+x_4), 2(x_2+x_3), 2(x_2+x_4),2(x_3+x_4)$ are   roots of $g(x)=0$. In view of $g(x)=0$ has exactly 6 roots,  then all roots of $g(x)=0$ are
$2(x_1+x_2), 2(x_1+x_3),  2(x_1+x_4), 2(x_2+x_3), 2(x_2+x_4),2(x_3+x_4)$ which are all real.

(2)  If $x_1,\ x_2$ are positive, and  $x_3,\ x_4$ are negative, then $\tau(g)=2(x_1+x_2)$. Note that $\mathcal{E}(f)=x_1+x_2-x_3-x_4=2(x_1+x_2)$ implies that   $\tau(g)=\mathcal{E}(f)$.
\end{proof}

Now we are ready to determine how the energy changes of $K_{1,i,n-i-1}$ due to deleting one edge between $1$-part and $i$-part, where $i\in\{2,3,n-2\}$.
\begin{lemma}\label{lem:tth}  If $e$ is an edge between 1-part and $i$-part in $K_{1,i,n-i-1}$,  $i\in\{2,3,n-2\}$. Then
\begin{enumerate}[(1).]
\item  $ \mathcal{E}(K_{1,2,n-3}-e)<\mathcal{E}(K_{1,2,n-3}) $,
\item   $ \mathcal{E}(K_{1,3,n-4}-e)>\mathcal{E}(K_{1,3,n-4}) $,
\item     $ \mathcal{E}(K_{1, n-2,1}-e)>\mathcal{E}(K_{1, n-2,1})$.
\end{enumerate}
\end{lemma}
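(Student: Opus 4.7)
The plan is to compute $\mathcal{E}(G-e)$ via an equitable partition of $G-e$ whose quotient polynomial is a quartic, then invoke Lemma \ref{lem:roots} to turn the energy comparison into a polynomial inequality. Write $G=K_{1,i,n-i-1}$ with $i\in\{2,3,n-2\}$, and let $u,v$ be the endpoints of $e$, with $u$ in the $1$-part. Since every complete multipartite graph has a unique positive eigenvalue, $\mathcal{E}(G)=2\lambda$, where $\lambda=\lambda(G)$ is the largest root of the quotient cubic
$$p(x)\;=\;x^{3}-Ax-C,\qquad A:=(n-i)(i+1)-1,\qquad C:=2i(n-i-1).$$
For $G-e$, the partition $\{u\},\{v\},V_i\setminus\{v\},V_3$ is equitable in all three cases with cells of sizes $1,1,i-1,n-i-1$; its $4\times 4$ quotient matrix has characteristic polynomial
$$f(x)\;=\;x^{4}+ax^{2}+bx+c,\qquad a=2-(i+1)(n-i),\quad b=-2(i-1)(n-i-1),\quad c=(i-1)(n-i-1),$$
and because the bipartite subgraph between any two cells of the partition is either complete or empty, the remaining $n-4$ eigenvalues of $G-e$ are zero, so $\mathcal{E}(G-e)=\mathcal{E}(f)$.

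Under our hypotheses, $a<0$, $b<0$, $c>0$, so Descartes' rule applied to $f(x)$ and to $f(-x)$ caps both the number of positive and of negative real roots by $2$; combined with the trace-zero condition (the coefficient of $x^{3}$ in $f$ vanishes) and $f(0)=c>0$, the four real roots of $f$ split as exactly two positive and two negative. Lemma \ref{lem:roots}(2) then gives $\mathcal{E}(G-e)=\tau(g)$ where $g(x)=x^{6}+8ax^{4}+16(a^{2}-4c)x^{2}-64b^{2}$, and the desired comparison $\mathcal{E}(G-e)$ versus $2\lambda$ reduces to signing $g(2\lambda)$. The decisive algebraic observation is the identity $A+a=1$, immediate from the definitions. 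Combined with $\lambda^{3}=A\lambda+C$ (hence $\lambda^{4}=A\lambda^{2}+C\lambda$ and $\lambda^{6}=A^{2}\lambda^{2}+2AC\lambda+C^{2}$), a short computation collapses $g(2\lambda)/64$ to the quadratic
$$\Delta(\lambda)\;:=\;(1-4c)\lambda^{2}+2C\lambda+(C-b)(C+b),$$
where $(C-b)(C+b)=4(n-i-1)^{2}(2i-1)$. The three assertions of the lemma are therefore equivalent, respectively, to $\Delta(\lambda)>0$ (case (1)) and $\Delta(\lambda)<0$ (cases (2), (3)).

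The last step is to sign $\Delta(\lambda)$ using the lower bound $\lambda>\sqrt{(n-i)(i+1)}=\sqrt{A+1}$ from Lemma \ref{lembound}(1), together with the upper bound obtained by plugging this lower bound into $\lambda^{2}=A+C/\lambda$. For $i=2$ (case (1)), $\Delta=(13-4n)\lambda^{2}+8(n-3)\lambda+12(n-3)^{2}$: the $O(n^{2})$ contributions from $12(n-3)^{2}$ and from $(13-4n)\lambda^{2}\sim-12n^{2}$ cancel to leading order, leaving a positive remainder of order $n^{3/2}$, so $\Delta>0$. For $i=n-2$ (case (3)) the leading coefficient $13-4n$ is the same but the constant drops to $4(2n-5)$, so $(13-4n)\lambda^{2}$ dominates and $\Delta<0$; for $i=3$ (case (2)) the leading coefficient $33-8n$ outstrips the constant $20(n-4)^{2}$ because $\lambda^{2}\sim 4n-13$, again giving $\Delta<0$. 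The main obstacle is making these asymptotic signs rigorous uniformly in $n$: I would multiply $\Delta(\lambda)$ by $\lambda$ and substitute $\lambda^{3}=A\lambda+C$ to eliminate the cubic term, then apply the two bounds on $\lambda$ to reduce (after squaring) to an inequality polynomial in $n$ that can be checked by elementary methods for all $n$ beyond a small threshold; residual small $n$ would be verified by direct SageMath computation, as done in Lemmas \ref{lem:four}, \ref{lem:ftt}, and \ref{lem:three}.
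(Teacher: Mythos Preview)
Your reduction is essentially the paper's: your $\Delta(\lambda)$ equals $r(2\lambda)/64$, where $r$ is the remainder the paper obtains by dividing the sextic $h$ by the cubic $g$ with $\tau(g)=2\lambda$. Your route via the identity $A+a=1$ is in fact neater than the paper's polynomial division, and the Descartes-rule check that $f$ has exactly two positive roots is correct.

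There is one logical gap and one unfinished step. The gap is in case~(1): from $g(2\lambda)>0$ you cannot conclude $\tau(g)<2\lambda$ without first excluding the possibility that $2\lambda$ lies between two positive roots of the even sextic $g$. This is easy to fix: the positive roots of $g$ are $2(x_1+x_2)$, $2|x_1+x_3|$, $2|x_1+x_4|$; since $\lambda(G)>\lambda(G-e)=x_1>x_2$ and $x_3,x_4<0$, one has $x_1+x_j<x_1<\lambda$ and $-(x_1+x_j)=x_2+x_{7-j}<x_2<\lambda$ for $j=3,4$, so the two smaller positive roots are already below $2\lambda$, and the sign of $g(2\lambda)$ does decide the comparison. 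You should say this.

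The unfinished step is the actual signing of $\Delta(\lambda)$, which you yourself flag as ``the main obstacle.'' Your asymptotic heuristics are correct, but the plan of multiplying by $\lambda$, substituting $\lambda^3=A\lambda+C$, squaring, and bounding is not carried out, and in case~(1) the $n^{2}$ terms cancel exactly, so the argument is delicate. The paper avoids this analysis by case-specific shortcuts: for $i=2$ it factors the sextic as $h=h_1h_2$ with cubic factors and compares each $\tau(h_j)$ to $\tau(g)$ directly; for $i=n-2$ (so $t=1$) it computes $2\lambda=1+\sqrt{1+8i}$ in closed form and evaluates $\Delta$ there; for $i=3$ it uses that $r$ is monotone beyond its unique positive root together with the explicit bound $2\lambda>2\sqrt{4t+3}$. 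Your uniform approach would work, but as written it is a proposal, not a proof.
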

\begin{proof}
For short, we write $K_{1, i,n-i-1}$ as $G$, and  $n-i-1$ as $t$, where $i\in\{2,3,n-2\}$.
 Then
 $$
Q=\left(\begin{array}{rrr}
0 & i & t \\
1 & 0 &  t \\
1 & i  & 0
\end{array}\right),
$$ is a quotient matrix  of $G$,
and

\begin{equation}\label{quochart}
\Phi(Q,x)=x^3  - ( ti + i + t) x - 2 ti.
\end{equation}
  It is easy to see that  $\Phi(Q ,x)=0$ has     two negative  roots, say $-x_{1},\ -x_{2}$,  and one   positive root, say $x_{3}$. Then we have
$-x_{1}-x_{2}+x_{3}=0$ and $\mathcal{E}(G)=\mathcal{E}(Q)=2x_{3}$.  If we denote $g(x)=8\phi(Q, \frac{x}{2})=x^3  - 4(  t i + i + t) x - 16  t i$, then $\mathcal{E}(G)=\tau(g)$.

Similarly,
 $$Q'=\left(\begin{array}{rrrr}
0 & 0 & 0 & t \\
0 & 0 & i - 1 & t \\
0 & 1 & 0 & t \\
1 & 1 & i - 1 & 0
\end{array}\right),$$
is a quotient matrix  of $G-e$, and
\begin{equation}\label{quochart1}
  \Phi(Q',x)=x^4 + (-ti - i - t + 1) x^2 -2 (ti - t) x + ti - t.
\end{equation}
Obviously,  $\Phi(Q',x)=0$ has exactly two positive    roots. Applying   Lemma \ref{lem:roots} to $\Phi(Q',x)$, we can obtain  $h(x)=  x^6 - 8 ( ti +  t + i  - 1) x^4 + 16 \big((  ti + t)^2  + (i-1)^2 (2t +1) \big) x^2 - 256 (  ti - t)^2$, such that  $\mathcal{E}(G-e)=\mathcal{E}(Q')=\tau(h)$.

 Let
  \begin{align*}
 q(x)&=x^3 - 4 ((i + 1) t + i - 2) x+ 16 ti, \\
 r(x)&=h(x)-q(x)g(x)=- 16 [ (4 ti - 4 t - 1) x^2 - 8 ti x - 16 (2 i - 1) t^2 ].
 \end{align*}
Since $q(x)-g(x)=8x + 32$, $\tau(q)< \tau(g)$ and $\tau(g )=\tau(q g )$.

{\bf Case 1}  If $i=2 $.\\
Then
$$ 
\begin{aligned}
h(x)=&(x^3+ 4 x^2 - (12 t - 4) x -16 t)(x^3- 4 x^2- (12 t - 4) x + 16 t),\\
g(x)=& x^3- 4 (2 + 3 t) x -32 t.
\end{aligned}
$$
Suppose 
$$ 
\begin{aligned}
h_{1}(x)=& x^3+ 4 x^2 - (12 t - 4) x -16 t , \qquad  h_{2}(x)= x^3- 4 x^2- (12 t - 4) x + 16 t \\
r_{2}(x)=&\frac{1}{4}(h_{2}(x)-g(x))=-x^2+3x+12t.
\end{aligned}
$$

Then $h_{1}(x)-g(x)>0$ when $x>0$, so $\tau(h_{1})<\tau(g)$. Since $h_{2}(x)=(1-x)r_2(x)+x+4t$, $h_{2}(x)>0$ always holds  for any $x>\tau(r_2)>1$.   This means  all the positive roots of $h_{2}(x)$ are in the
interval $(0,\tau(r_2))$. On the other hand, we find that $h_{2}(x)>g(x)$ holds in $(0,\tau(r_2))$. Combining these with the fact $g(x)=0$ has exactly one positive root, we conclude  $\tau(g)>\tau(h_{2})$.  Consequently, $ \mathcal{E}(G)>\max\{\tau(h_{1}),\tau(h_{2})\}=\tau(h)=\mathcal{E}(G-e)$, Hence (1) holds.

{\bf Case 2} If $  i=3$.

Note that $g(2\sqrt{4t+3})=-48t<0 $, thus $\tau(g)>2\sqrt{4t+3}$. Since $r(x)=-16( (8 \, t - 1 ) x^{2} - 24 \, t x- 80 \, t^{2}) $ has only one positive root, say $x_0$, so $r(x)$ is a decreasing function for $x> x_0$. It is easy to see $x_0<2\sqrt{4t+3}$ when $t \geq 2$. Therefore, $h(\tau(g))=r(\tau(g))< r(2\sqrt{4t+3})<0$  for $t \geq 2$. Hence $h(\tau(g))=r(\tau(g))<0$, and then  $\tau(h)>\tau(g)$, i.e., $\mathcal{E}(G-e)>\mathcal{E}(G)$.

 {\bf Case 3} If $i=n-2$.

 Then $t=1$, and $g(x)={\left(x^{2} - 2 \, x- 8 \, i \right)} {\left(x + 2\right)}$, which yields that
$\mathcal{E}(G)=\tau(g)=1+\sqrt{1+8i}$.

Note that $h(1+\sqrt{1+8i})=r(1+\sqrt{1+8i})=32(-16 \, i^{2} + 36 \, i + 5 \, \sqrt{8 \, i + 1} - 3)$  is a decreasing function for $i \geq 3$. Hence, $h(1+\sqrt{1+8i})\leq h(6)=-448<0$  when $i \geq 3$. Consequently,   $\mathcal{E}(G-e)=\tau(h )>1+\sqrt{1+8i}=\mathcal{E}(G)$.
\end{proof}

The following proposition is an immediate result from Lemma \ref{lem:three} and Lemma \ref{lem:tth}.
\begin{proposition}\label{pro:tri}
\begin{enumerate}[(1).]
\item $\mathcal{E}(K_{1,2,n-3}-e)< \mathcal{E}(K_{1,2,n-3})$ for any edge $e$ between $1$-part and $2$-part.
\item  $\mathcal{E}(K_{1,i,n-i-1}-e)>\mathcal{E}(K_{1,i,n-i-1})$ for $i\geq 3$ and any edge $e$ between $1$-part and $i$-part.
\end{enumerate}
\end{proposition}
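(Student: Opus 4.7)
The plan is to prove Proposition \ref{pro:tri} by a straightforward case analysis that splices together the two preceding lemmas of this section. Because $K_{1,i,n-i-1}$ requires $1\le i\le n-2$ and the edge $e$ is prescribed to lie between the 1-part and the $i$-part, the admissible range of $i$ is $\{2,3,\ldots,n-2\}$, and it suffices to verify that Lemma \ref{lem:three} together with Lemma \ref{lem:tth} covers every $i$ in this range.

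Part (1) is literally Lemma \ref{lem:tth}(1), so no further work is required there. For part (2), I would split $i\ge 3$ into three sub-cases according to where $i$ sits relative to $n$. First, $i=3$ is handled directly by Lemma \ref{lem:tth}(2). Second, the ``bulk'' range $4\le i\le n-3$ is handled by Lemma \ref{lem:three}. Third, $i=n-2$: here $K_{1,i,n-i-1}=K_{1,n-2,1}$, and up to interchanging the two 1-parts the deleted edge lies between a 1-part and the $(n-2)$-part, so Lemma \ref{lem:tth}(3) applies. These three sub-cases partition $\{3,4,\ldots,n-2\}$ for every $n\ge 5$: when $n=5$ the first and third cases coincide on $K_{1,3,1}$ (harmlessly giving the same inequality twice), when $n=6$ the middle range is empty and only the endpoint cases $i=3$ and $i=n-2=4$ occur, and for $n\ge 7$ all three cases are genuinely present and contiguous.

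Since Proposition \ref{pro:tri} is phrased as an immediate consequence of the two lemmas, there is no real obstacle at this step: all of the analytic heavy lifting has already been done upstream, where Lemma \ref{lem:three} combines the spectral-radius bound of Lemma \ref{lembound} with the sufficient condition of Lemma \ref{lem:cond} (plus a finite SageMath verification for small $n$), and Lemma \ref{lem:tth} reduces the three exceptional values of $i$ to explicit comparisons between the roots of the cubic $g(x)$ and the sextic $h(x)$ produced by Lemma \ref{lem:roots}. The one point where I would be careful is the $i=n-2$ sub-case: I need to check that ``edge between the 1-part and the $i$-part'' in the proposition really matches ``edge between the 1-part and the $(n-2)$-part of $K_{1,n-2,1}$'' in Lemma \ref{lem:tth}(3), which is immediate from the symmetry of the two 1-parts of $K_{1,n-2,1}$. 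With this observation the proof is complete in a few lines.
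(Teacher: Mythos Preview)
Your proposal is correct and matches the paper's approach exactly: the paper simply states that Proposition~\ref{pro:tri} is an immediate consequence of Lemma~\ref{lem:three} and Lemma~\ref{lem:tth}, and your case split $i=2$, $i=3$, $4\le i\le n-3$, $i=n-2$ is precisely the intended splice. The symmetry remark for $i=n-2$ is actually unnecessary (the edge in Lemma~\ref{lem:tth}(3) already lies between the first $1$-part and the $(n-2)$-part, matching the proposition verbatim), but it does no harm.
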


Combining these with the well-known results of bipartite graphs,   we can get our main result.

\begin{theorem}\label{thm:main} Let  $e$ be an edge between the $t_i$-part and $t_j$-part of $K_{t_1,\ldots,t_k}$.  Then
\begin{enumerate}[(1).]
\item  For $k \geq 4$,  if $t_i=t_j=1$, then  $\mathcal{E}(K_{t_1,\ldots,t_k}-e )<\mathcal{E}(K_{t_1,\ldots,t_k})$, otherwise,  $\mathcal{E}(K_{t_1,\ldots,t_k}-e)>\mathcal{E}(K_{t_1,\ldots,t_k})$.
\item For  $k=3$, if $t_i+t_j \leq 3$, then  $\mathcal{E}(K_{t_1,\ldots,t_k}-e)<\mathcal{E}(K_{t_1,\ldots,t_k})$, otherwise, $\mathcal{E}(K_{t_1,\ldots,t_k}-e)>\mathcal{E}(K_{t_1,\ldots,t_k})$.
\item For $k=2$, if $\min\{t_i,t_j\}=1$, then  $\mathcal{E}(K_{t_1,\ldots,t_k}-e)<\mathcal{E}(K_{t_1,\ldots,t_k})$, otherwise, $\mathcal{E}(K_{t_1,\ldots,t_k}-e)>\mathcal{E}(K_{t_1,\ldots,t_k})$.
\end{enumerate}
\end{theorem}

\end{document}